\documentclass[11pt]{amsart}

\usepackage{amsmath,amssymb,amsfonts,mathtools}
\usepackage{enumitem}
\usepackage{hyperref}
\usepackage{microtype}
\usepackage{mathrsfs}

\numberwithin{equation}{section}

\newtheorem{theorem}{Theorem}[section]
\newtheorem{lemma}[theorem]{Lemma}

\theoremstyle{definition}
\newtheorem{definition}[theorem]{Definition}

\newcommand{\R}{\mathbb{R}}
\newcommand{\N}{\mathbb{N}}
\newcommand{\W}{W_0^{1,p}(\Omega)}
\newcommand{\J}{\mathcal{J}}

\title{A $p$-Laplacian Schr\"odinger--Maxwell system with rapidly growing nonlinearities}

\author[G. da Silva]{Genival da Silva}
\address{Department of Mathematical, Physical, and Engineering Sciences,
Texas A\&M University--San Antonio, San Antonio, TX, USA}
\email{gdasilva@tamusa.edu}

\subjclass[2020]{35J92, 35J47, 35J60, 35B45, 47H10}

\keywords{$p$-Laplacian, Schr\"odinger--Maxwell system, convex nonlinearity,
saddle point, truncation method}

\begin{document}

\begin{abstract}
Let $\Omega\subset\R^N$, $N\ge2$, be a bounded domain and let $1<p<\infty$.
Inspired by the exponential case treated in \cite{BO2026}, we study the
quasilinear Schr\"odinger--Maxwell system
\[
\begin{cases}
-\Delta_p u+\psi G'(u)=f &\text{in }\Omega,\\
-\Delta_p\psi=G(u) &\text{in }\Omega,\\
u=\psi=0 &\text{on }\partial\Omega,
\end{cases}
\]
where $G\in C^1(\R)$ is even, convex, nonnegative and nontrivial, with
$G(0)=0$, and satisfies
\[
G(t)\lesssim 1+|G'(t)|
\qquad\text{for large }|t|.
\]
Under the assumption
\[
|f|\ln(1+|f|)\in L^1(\Omega),
\]
we prove the existence of a finite-energy weak solution and show that the
solution is a saddle point of the associated functional.
\end{abstract}

\maketitle

\section{Introduction}

Schr\"odinger--Maxwell systems arise naturally in the study of the interaction
between a quantum mechanical field and its associated electromagnetic field.
A variational treatment of such systems goes back to the work of Benci and
Fortunato \cite{BenciFortunato}, where the corresponding energy functional,
although unbounded both from above and from below, was shown to possess
critical points of saddle type.

In the framework of Dirichlet problems with low summability data, Boccardo and Orsina
\cite{BO2018} discovered an important regularizing effect produced by the
coupling between the two equations: finite-energy solutions may exist even
when the source does not belong to the natural dual space. This phenomenon has
subsequently been investigated for several classes of Schr\"odinger--Maxwell
systems, including semilinear systems and their saddle-point structure
\cite{BO2020}, $p$-Laplacian systems \cite{Durastanti2019},
Kirchhoff--Schr\"odinger--Maxwell problems \cite{BOKirchhoff}, and systems
involving singular nonlinearities \cite{BOSingular}.

More recently, Boccardo and Orsina \cite{BO2026} considered a
Schr\"odinger--Maxwell system with exponential nonlinearities and proved the
existence of finite-energy saddle-point solutions under the assumption
$|f|\ln(1+|f|)\in L^1(\Omega)$. The purpose of the present paper is to combine
the quasilinear diffusion appearing in \cite{Durastanti2019} with a general
class of rapidly growing convex couplings suggested by the exponential model
of \cite{BO2026}.

We consider
\begin{equation}\label{main-system}\tag{*}
\begin{cases}
-\Delta_p u+\psi G'(u)=f &\text{in }\Omega,\\
-\Delta_p\psi=G(u) &\text{in }\Omega,\\
u=\psi=0 &\text{on }\partial\Omega,
\end{cases}
\end{equation}
where
\[
\Delta_pv=\operatorname{div}(|Dv|^{p-2}Dv),
\qquad 1<p<\infty.
\]
Throughout the paper we assume
\begin{equation}\label{G-assumptions}
G\in C^1(\R),\qquad G \text{ is even, convex, nonnegative and }G\not\equiv 0,
\qquad G(0)=0,
\end{equation}
and that there exist $C_0>0$ and $t_0\ge0$ such that
\begin{equation}\label{G-growth}
G(t)\le C_0\bigl(1+|G'(t)|\bigr),
\qquad |t|\ge t_0.
\end{equation}
Since $G$ is even and convex, $G'$ is odd and
\begin{equation}\label{sign-Gprime}
G'(t)t\ge0\qquad\text{for every }t\in\R.
\end{equation}
Typical examples of nonlinearities satisfying our assumptions are functions of the form
\[
G(t)=e^{a|t|}-a|t|-1,
\qquad a>0.
\]
Another example is
\[
G(t)=\cosh(at)-1,
\qquad a>0.
\]
Finally, a super-exponential example is
\[
G(t)=e^{t^2}-1.
\]
Notice that the structural condition \eqref{G-growth} links the two equations of \ref{main-system}: at large
values of $|u|$, the source $G(u)$ in the second equation is controlled by the
absorption $G'(u)$ appearing in the first. The exponential function
\[
G(t)=e^{|t|}-|t|-1
\]
is the motivating example, but no explicit formula for $G$ is used below.

The source $f$ satisfies
\begin{equation}\label{datum-assumption}
f \in L^1(\Omega),
\qquad
|f|\ln(1+|f|) \in L^1(\Omega),
\end{equation}
The natural functional related to System \ref{main-system} is
\begin{equation}\label{functional}
\J(v,\varphi)
=
\frac1p\int_\Omega |Dv|^p
-\frac1p\int_\Omega |D\varphi|^p
+\int_\Omega \varphi^+G(v)
-\int_\Omega fv.
\end{equation}
It is convex in $v$ and concave in $\varphi$ after reduction to the positive
cone in the second variable. Since $G$ may grow very rapidly,
\eqref{functional} need not be finite or $C^1$ on the whole product space.
We therefore exploit its convex--concave structure directly.

We record two consequences of \eqref{G-assumptions}--\eqref{G-growth}
which will be used later in the text.

\begin{lemma}\label{global-growth-G}
There exists $C>0$ such that
\begin{equation}\label{global-G-Gprime}
0\le G(t)\le C\bigl(1+|G'(t)|\bigr)
\qquad\text{for every }t\in\R.
\end{equation}
Moreover, there exist $a>0$, $c>0$ and $T>0$ such that
\begin{equation}\label{exp-lower-G}
e^{a|t|}\le c\bigl(1+G(t)\bigr)
\qquad\text{for }|t|\ge T.
\end{equation}
Consequently, after changing $c$,
\begin{equation}\label{exp-lower-G-global}
e^{a|t|}\le c\bigl(1+G(t)\bigr)
\qquad\text{for every }t\in\R.
\end{equation}
\end{lemma}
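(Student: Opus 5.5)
For \eqref{global-G-Gprime} I will split the line into $|t|\ge t_0$ and $|t|\le t_0$. Nonnegativity is part of \eqref{G-assumptions}. On $|t|\ge t_0$ the bound is exactly \eqref{G-growth} with $C=C_0$. On the compact set $[-t_0,t_0]$, the function $G$ is continuous, so $G\le M:=\max_{[-t_0,t_0]}G$, and hence $G(t)\le M(1+|G'(t)|)$ there. Taking $C=\max\{C_0,M\}$ (or $C_0$ if $t_0=0$) gives \eqref{global-G-Gprime}.

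For \eqref{exp-lower-G} I will turn \eqref{G-growth} into a differential inequality for $G$ on $[t_0,\infty)$ and integrate it. First I need that $G$ grows, so I will show $G(t)\to\infty$ as $t\to\infty$. Since $G$ is even and convex, $t=0$ is a minimum point and $G'(0)=0$. Because $G'$ is nondecreasing, $G$ is nondecreasing on $[0,\infty)$. As $G\not\equiv0$ and $G(0)=0$, there is $s>0$ with $G(s)>0$, and then $G'(s)\ge G(s)/s>0$ by convexity, since the chord slope from $0$ is at most $G'(s)$. Convexity again gives $G(t)\ge G(s)+G'(s)(t-s)\to\infty$. Hence there is $T_1\ge\max\{t_0,s\}$ such that $G(t)\ge 2C_0$ and $G'(t)>0$ for $t\ge T_1$.

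On $[T_1,\infty)$, \eqref{G-growth} gives $G\le C_0+C_0G'\le \tfrac12G+C_0G'$, so $G'\ge G/(2C_0)$. Since $G>0$ there, I will write this as $(\ln G)'\ge 1/(2C_0)$. Integrating from $T_1$ to $t$ yields $G(t)\ge G(T_1)e^{(t-T_1)/(2C_0)}$. Setting $a=1/(2C_0)$, $T=T_1$ and $c=e^{aT_1}/G(T_1)$, this gives $e^{at}\le cG(t)\le c(1+G(t))$ for $t\ge T$. Evenness of $G$ extends the estimate to $t\le -T$.

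For \eqref{exp-lower-G-global}, on $|t|\le T$ I have $e^{a|t|}\le e^{aT}\le e^{aT}(1+G(t))$ because $G\ge0$. Replacing $c$ by $\max\{c,e^{aT}\}$ then gives the bound on all of $\R$.

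The main obstacle is the integration step in \eqref{exp-lower-G}. It requires $G>0$ on the interval, so that dividing by $G$ is legitimate, and $G$ large enough to absorb the additive constant $C_0$ into the differential inequality. Both come from the unboundedness of $G$, which uses nontriviality and convexity as shown above. The hypothesis $G\in C^1$ is what makes $\ln G$ differentiable, so the integration is rigorous.
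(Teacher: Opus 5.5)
Your proof is correct and follows essentially the same route as the paper. You enlarge the constant on $[-t_0,t_0]$, absorb the additive constant once $G\ge 2C_0$ to get $G'\ge G/(2C_0)$, integrate, and extend by evenness and by enlarging $c$ on $[-T,T]$; along the way you also justify what the paper leaves implicit, namely that $G$ is unbounded and nondecreasing on $[0,\infty)$, so $G\ge 2C_0$ persists for all $t\ge T$.
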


\begin{proof}
The first assertion follows from \eqref{G-growth} after enlarging the constant
on the compact interval $[-t_0,t_0]$.

Since $G$ is nontrivial, even and convex with $G(0)=0$, it is unbounded on
$[0,\infty)$. For $t\ge t_0$, convexity gives $G'(t)\ge0$, and
\eqref{G-growth} yields
\[
G'(t)\ge \frac1{C_0}G(t)-1.
\]
Choose $T\ge t_0$ so large that $G(T)>2C_0$. Then, for $t\ge T$,
\[
G'(t)\ge \frac1{2C_0}G(t).
\]
Integrating the differential inequality gives
\[
G(t)\ge G(T)e^{(t-T)/(2C_0)},\qquad t\ge T.
\]
Evenness gives the same estimate for negative $t$. This proves
\eqref{exp-lower-G}; the global version follows by enlarging the constant on
$[-T,T]$.
\end{proof}

\begin{lemma}\label{scaled-young}
For every $a>0$ there exists $C_a>0$ such that
\begin{equation}\label{scaled-young-eq}
st\le C_a\,s\ln(1+s)+e^{at}+C_a,
\qquad s,t\ge0.
\end{equation}
Moreover, for every $\theta>0$ there exists $C_\theta>0$ such that
\begin{equation}\label{scaled-log-young}
s\ln(1+t)
\le C_\theta s\ln(1+s)+(1+t)^\theta+C_\theta,
\qquad s,t\ge0.
\end{equation}
\end{lemma}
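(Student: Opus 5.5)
The plan is to prove both inequalities by splitting into cases according to which variable dominates. In each case the product on the left is absorbed by one of the terms on the right. No earlier result of the paper is needed; this is an elementary Young-type argument for the pair $s\ln(1+s)$ and $e^{at}$, which are essentially conjugate.

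For \eqref{scaled-young-eq}, fix $a>0$ and $s,t\ge0$, and split at the threshold $e^{at/2}$.

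\emph{Case $s\le e^{at/2}$.} Here $st\le t e^{at/2}$. Since $te^{-at/2}$ is bounded on $[0,\infty)$, say by $M_a$, we would like $te^{at/2}\le e^{at}+M_a'$. This holds directly: $te^{at/2}\le e^{at}$ as soon as $t\le e^{at/2}$, which is true for $t$ large. On the remaining compact range of $t$, the quantity $te^{at/2}$ is bounded by a constant.

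\emph{Case $s>e^{at/2}$.} Then $t<\frac{2}{a}\ln s$. If also $s\ge1$, we get $st\le \frac2a s\ln s\le \frac2a s\ln(1+s)$. If instead $s<1$, then $t<0$ is impossible, so this subcase is empty because $e^{at/2}\ge1$.

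Taking $C_a=\max(2/a,M_a')$ then gives \eqref{scaled-young-eq}.

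For \eqref{scaled-log-young}, fix $\theta>0$ and split at the threshold $(1+t)^{\theta/2}$.

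\emph{Case $s\le(1+t)^{\theta/2}$.} Then $s\ln(1+t)\le(1+t)^{\theta/2}\ln(1+t)$. Since $\ln(1+t)\le(1+t)^{\theta/2}$ for $t$ large, this is at most $(1+t)^\theta$ there. On the remaining compact range of $t$ it is bounded by a constant depending only on $\theta$.

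\emph{Case $s>(1+t)^{\theta/2}$.} Then $\ln(1+t)<\frac2\theta\ln s\le\frac2\theta\ln(1+s)$, which requires $s>1$ and is automatic here. Hence $s\ln(1+t)\le\frac2\theta s\ln(1+s)$.

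Taking $C_\theta=\max\bigl(2/\theta,\ \sup_{t\ge0}\bigl[(1+t)^{\theta/2}\ln(1+t)-(1+t)^\theta\bigr]^+\bigr)$ gives \eqref{scaled-log-young}. This supremum is finite because the bracket is negative for large $t$.

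No step is a genuine obstacle. The only point needing care is choosing the splitting threshold with exponent halved, $e^{at/2}$ and $(1+t)^{\theta/2}$. This halving leaves room both to absorb the polynomial or logarithmic factor into the exponential or power term, and to bound $t$ or $\ln(1+t)$ by a multiple of $\ln(1+s)$ in the complementary case.
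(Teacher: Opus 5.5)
Your proof is correct, but it takes a different route from the paper. The paper derives both estimates from the single Young-type inequality $xy\le x\ln(1+x)+e^y-1$ for $x,y\ge0$. This is Young's inequality for the complementary pair $x\ln(1+x)$ and $e^y$. It is applied with $x=s/a$, $y=at$ for \eqref{scaled-young-eq}, and with $x=s/\theta$, $y=\theta\ln(1+t)$ for \eqref{scaled-log-young}, using $e^{\theta\ln(1+t)}=(1+t)^\theta$. You instead split according to whether $s$ lies below or above $e^{at/2}$ (resp.\ $(1+t)^{\theta/2}$), and in each case absorb the product into one term on the right. Both cases check out, including your observation that the second case forces $s>1$.

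The paper's route is shorter and shows that the two estimates are really one. In fact \eqref{scaled-log-young} follows from \eqref{scaled-young-eq} with $a=\theta$ simply by substituting $\ln(1+t)\ge0$ for $t$, so your second case analysis could be replaced by that one line.

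Your route has its own advantages. It is self-contained, since it does not need the base inequality, which itself requires a short calculus or Legendre-transform check. It also avoids a detail the paper passes over: for $a<1$ the rescaling gives $\frac{s}{a}\ln\bigl(1+\frac{s}{a}\bigr)\le\frac{1}{a}s\ln(1+s)+\frac{s}{a}\ln\frac{1}{a}$, and the linear remainder must still be absorbed, e.g.\ via $s\le s\ln(1+s)+e$. The cost is that your constants are suprema over bounded exceptional sets and are less explicit. The scaling argument gives a $C_a$ of order $1/a$ up to a logarithmic factor as $a\to0$. This difference is irrelevant for how the lemma is used later in the paper.
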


\begin{proof}
Both estimates follow from
\[
xy\le x\ln(1+x)+e^y-1,
\qquad x,y\ge0,
\]
after a scaling of $x$ and $y$.
\end{proof}

\begin{definition}
A pair $(u,\psi)\in\W\times\W$ is a weak solution of
\eqref{main-system} if $\psi\ge0$ a.e.,
\[
\psi|G'(u)|\in L^1(\Omega),
\qquad
G(u)\in L^1(\Omega),
\]
and
\begin{equation}\label{weak-first}
\int_\Omega |Du|^{p-2}Du\cdot Dv
+
\int_\Omega \psi G'(u)v
=
\int_\Omega fv
\end{equation}
for every $v\in\W\cap L^\infty(\Omega)$, while
\begin{equation}\label{weak-second}
\int_\Omega |D\psi|^{p-2}D\psi\cdot D\varphi
=
\int_\Omega G(u)\varphi
\end{equation}
for every $\varphi\in\W$.
\end{definition}

For a fixed nonnegative $\psi\in\W$, define
\[
\mathcal D_\psi
=
\left\{
v\in\W:
fv\in L^1(\Omega),\;\psi G(v)\in L^1(\Omega)
\right\}.
\]
Whenever $G(u)\in L^1$, define
\[
\mathcal E_u
=
\left\{
\varphi\in\W:
\varphi^+G(u)\in L^1(\Omega)
\right\}.
\]

We are now in a position to state our main result.

\begin{theorem}\label{main-thm}
Assume \eqref{G-assumptions}, \eqref{G-growth}, and
\eqref{datum-assumption}. Then there exists a weak solution
\[
(u,\psi)\in\W\times\W
\]
of \eqref{main-system}. Moreover, $(u,\psi)$ satisfies
\begin{equation}\label{saddle}
\J(u,\varphi)
\le
\J(u,\psi)
\le
\J(v,\psi)
\end{equation}
for every $\varphi\in\mathcal E_u$ and every $v\in\mathcal D_\psi$.
\end{theorem}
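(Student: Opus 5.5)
We construct approximate solutions, derive uniform estimates, pass to the limit, and finally check the saddle inequalities by convexity and concavity.

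\textbf{Approximation.} Let $f_n=T_n(f)$ and replace $G$ by a truncation $G_n$ that is convex, even, of controlled growth and coincides with $G$ on $[-n,n]$. For instance, $G_n$ can be extended linearly with slope $G'(\pm n)$ for $|t|>n$. Since $G_n(t)\le G(t)$ and $|G_n'|\le|G'|$, the inequality \eqref{global-G-Gprime} still holds with the same constant, because $G_n(t)\le G(n)\le C(1+G'(n))$ for $|t|>n$. For fixed $n$ we obtain a solution $(u_n,\psi_n)$ with $\psi_n\ge0$ as a saddle point of the regularized functional $\J_n$. One route is Ky Fan/minimax on bounded convex sets, or Schauder: given $w$, solve $-\Delta_p\psi=G_n(T_k w)$ and then minimize the strictly convex coercive functional in $v$. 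The weak maximum principle gives $\psi_n\ge0$, and $u_n\in L^\infty$ because $f_n$ is bounded and the absorption term has the right sign by \eqref{sign-Gprime}.

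\textbf{Uniform estimates.} These exploit the coupling, and I expect them to be the main obstacle. Test the first equation with $u_n$ and the second with $\psi_n$. Using \eqref{global-G-Gprime} and \eqref{sign-Gprime}, I aim for a bound of the form
\[
\int|D\psi_n|^p=\int G_n(u_n)\psi_n\le C\int\psi_n+C\int\psi_n|G_n'(u_n)|.
\]
The last term is not directly $\int\psi_nG_n'(u_n)u_n$. So I split at $|u_n|\ge1$, where $|G'_n(u_n)|\le G_n'(u_n)u_n$, and treat $|u_n|<1$ separately, where $|G_n'|\le G'(1)$. The first equation then gives
\[
\int|Du_n|^p+\int\psi_nG_n'(u_n)u_n=\int f_nu_n.
\]
To control $\int f_nu_n$, apply \eqref{scaled-young-eq}:
\[
\int|f||u_n|\le C_a\int|f|\ln(1+|f|)+\int e^{a|u_n|}+C.
\]
By \eqref{exp-lower-G-global}, $\int e^{a|u_n|}\le c(|\Omega|+\int G(u_n))$, but this needs $G$ itself, not $G_n$. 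To bound $\int G_n(u_n)$, test the second equation with $\varphi_1$, or more simply use convexity: $G_n(t)\le G_n'(t)t$. This gives $\int G_n(u_n)\le\int_{\{\psi_n\ge1\}}\psi_nG_n'(u_n)u_n+\int_{\{\psi_n<1\}}G_n(u_n)$, and the remaining piece on $\{\psi_n<1\}$ is the delicate one. I would try testing the first equation with $T_1(u_n)(1-T_1\psi_n)$-type functions, or using $G_n(u_n)$ against $1$ in the second equation with test $T_1(\psi_n)$, to close the loop. This yields bounds for $u_n,\psi_n$ in $\W$, for $\int\psi_nG_n'(u_n)u_n$, and for $\int G_n(u_n)$, after absorbing the small multiple of $a$ chosen via Lemma \ref{scaled-young}.

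\textbf{Passage to the limit.} Extract subsequences with $u_n\rightharpoonup u$ and $\psi_n\rightharpoonup\psi$ weakly and a.e. Fatou gives $G(u)\in L^1$ and $\psi|G'(u)|\in L^1$. To obtain equi-integrability of $G_n(u_n)$, test the first equation with $T_1(G_k(u_n))$-type functions, in the spirit of Boccardo--Orsina. The estimate $\int_{\{|u_n|>k\}}\psi_n|G_n'(u_n)|\to0$ then gives equi-integrability of $\psi_nG_n'(u_n)$ and, via \eqref{global-G-Gprime}, of $G_n(u_n)$ where $\psi_n$ is not small. Vitali's theorem then yields $L^1$ convergence. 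Strong convergence of the gradients follows by the standard Boccardo--Murat argument, using tests $T_k(u_n-u)$ and, for $\psi_n$, $\psi_n-\psi$. The latter requires $G_n(u_n)\psi_n\to G(u)\psi$ in $L^1$, which I would obtain from the $\W$ bound and the log-type Young inequality \eqref{scaled-log-young}. We then pass to the limit in \eqref{weak-first} and \eqref{weak-second}.

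\textbf{Saddle property.} For $v\in\mathcal D_\psi$, convexity of $v\mapsto\frac1p\int|Dv|^p+\int\psi G(v)$ gives
\[
\J(v,\psi)-\J(u,\psi)\ge\int|Du|^{p-2}Du\cdot D(v-u)+\int\psi G'(u)(v-u)-\int f(v-u)\ge0.
\]
This requires testing \eqref{weak-first} with $v-u$, which is unbounded. So we test with $T_k(v-u)$, pass to the limit $k\to\infty$ using Fatou on the sign-definite part, and use $G(v)-G(u)\ge G'(u)(v-u)$. For $\varphi\in\mathcal E_u$, the map $\varphi\mapsto-\frac1p\int|D\varphi|^p+\int\varphi^+G(u)$ satisfies
\[
\J(u,\varphi)\le-\tfrac1p\int|D\varphi|^p+\int\varphi G(u)+(\dots),
\]
since $\varphi^+\ge\varphi$ is handled as follows: first compare with $\varphi^+$, using $|D\varphi^+|\le|D\varphi|$, then apply concavity and \eqref{weak-second} tested with $T_k(\varphi^+)-\psi$, passing to the limit by monotone convergence.
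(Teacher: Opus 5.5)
There is a genuine gap at the step you flag as ``delicate'': the uniform estimates are never closed, and as written they are circular. You bound $\int\psi_n|G_n'(u_n)|$ by $\int\psi_nG_n'(u_n)u_n$, but the energy identity controls that only through $\int f_nu_n$. Bounding $\int f_nu_n$ requires $\int e^{a|u_n|}$, i.e.\ a bound on $\int_\Omega G(u_n)$ over all of $\Omega$, including $\{\psi_n<1\}$. The tests you suggest do not supply it. Testing the second equation with $T_1(\psi_n)$ controls only $\int G(u_n)T_1(\psi_n)$, which degenerates precisely on $\{\psi_n<1\}$, and an eigenfunction weight degenerates at $\partial\Omega$.

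The missing idea is to test the second equation with $|T_k(u_n)|$, and combine this with $\int|DT_k(u_n)|^p\le k\|f\|_1$ (obtained by testing the first equation with $T_k(u_n)$):
\[
\int_{\{|u_n|>k\}}G(u_n)\le\frac1k\int_\Omega|T_k(u_n)|\,G(u_n)\le\frac1k\|D\psi_n\|_p^{p-1}\|DT_k(u_n)\|_p\le C\|D\psi_n\|_p^{p-1}k^{-1/p'}.
\]
In the paper, $\|D\psi_n\|_p$ is bounded beforehand without any energy bound on $u_n$. This comes from the $L^1$ estimate $\int_{\{|u_n|>k\}}\psi_n|G'(u_n)|\le\int_{\{|u_n|>k\}}|f|$ together with \eqref{global-G-Gprime}. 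That estimate is a regularized sign test, essentially the $T_1(G_k(u_n))$ test you only invoke later. The tail estimate above then gives boundedness and equi-integrability of $G(u_n)$, hence of $e^{a|u_n|}$, hence $\int|f||u_n|\le C$ and finally the $\W$ bound on $u_n$. Even in your own chain, inserting the $|T_1(u_n)|$ test would close the loop, giving $\|D\psi_n\|_p^p\le C(1+\|D\psi_n\|_p+\|D\psi_n\|_p^{p-1})$.

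The same tail estimate supplies equi-integrability of $G(u_n)$ where $\psi_n$ is small, which your limit passage also leaves uncovered. It also lets you pass to the limit in the second equation through the Boccardo--Murat lemma with $G(u_n)\to G(u)$ in $L^1$, and then extend to all test functions in $\W$ by truncation. That replaces the unexplained claim $G_n(u_n)\psi_n\to G(u)\psi$.

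Your truncation claim is also false. For the linear extension, $G_n(t)=G(n)+G'(n)(|t|-n)>G(n)$ when $|t|>n$. So \eqref{global-G-Gprime} fails for $G_n$ at large $|t|$, since $G_n$ grows linearly while $G_n'$ is constant. $G_n$ also loses the exponential lower bound \eqref{exp-lower-G-global} that you need for $\int e^{a|u_n|}$. This is easily repaired, and in fact unnecessary. Comparison with $-\Delta_pz=|f_n|$ uses only \eqref{sign-Gprime}, so $\|u_n\|_\infty\le\|z\|_\infty$ independently of the coupling. The paper therefore simply keeps $G$ and runs Schauder in $L^{p'}$ on the $\psi$-variable.

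Your saddle-point argument, by contrast, is sound and differs from the paper's. You test \eqref{weak-first} with $T_k(v-u)$ and use convexity. This works because $\psi G(v),\psi G(u)\in L^1$ bound $(\psi G'(u)(v-u))^+$. The identity forces $\int\psi G'(u)T_k(v-u)$ to a finite limit, using $fu\in L^1$, which follows from \eqref{scaled-young-eq} and $e^{a|u|}\in L^1$. Monotone convergence on each sign then gives $\psi G'(u)(v-u)\in L^1$. The paper instead passes to the limit in the minimality of $u_n$ for the approximate functional, using lower semicontinuity and Fatou, and extends from bounded $v$ by truncation. Your route only needs the limit equation, not the variational characterization of $u_n$.
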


\medskip

{\bf Notation.} 
\medskip

The following truncation function will be used throughout the text.
\[
T_k (s)= \max ( -k, \min ( s, k)),\quad
\]
Throughout the paper, the letter $C$ denotes a positive constant that may vary from line to line. We denote the Holder conjugate of $p$ by $p'$.

\section{Proof of Theorem~\ref{main-thm}}

Fix $n\in\N$ and let
$
f_n=T_n(f).
$
We first construct solutions $(u_{n},\psi_{n})$ of the approximated system
\begin{equation}\label{ap-sys}\tag{**}
\begin{cases}
-\Delta_pu_n+\psi_nG'(u_n)=f_n &\text{in }\Omega,\\
-\Delta_p\psi_n=G(u_n) &\text{in }\Omega,\\
u_n=\psi_n=0 &\text{on }\partial\Omega.
\end{cases}
\end{equation}

Fix $\varphi\in L^{p'}(\Omega)$. We show that the problem
\begin{equation}\label{fixed-phi-eq}
-\Delta_p w+\varphi^+G'(w)=f_n,
\qquad w\in\W,
\end{equation}
has a unique weak solution which is bounded independently of $\varphi$.

For $m>0$, set
\[
g_m(t)=T_m(G'(t)).
\]
Since $G$ is convex, $G'$ is nondecreasing, and therefore $g_m$ is
continuous, bounded, and nondecreasing. Moreover, by
\eqref{sign-Gprime},
\[
g_m(t)t\ge0
\qquad\text{for every }t\in\mathbb R.
\]
Consider the truncated problem
\begin{equation}\label{truncated-fixed-phi}
-\Delta_p w_m+\varphi^+g_m(w_m)=f_n,
\qquad w_m\in\W.
\end{equation}
Define
\[
\mathcal A_m:\W\longrightarrow\W^*
\]
by
\[
\langle\mathcal A_m(v),\eta\rangle
=
\int_\Omega |Dv|^{p-2}Dv\cdot D\eta
+
\int_\Omega\varphi^+g_m(v)\eta.
\]
This operator is well defined. Indeed, since $g_m$ is bounded and
$\varphi\in L^{p'}(\Omega)$,
\[
\left|
\int_\Omega\varphi^+g_m(v)\eta
\right|
\le
m\|\varphi\|_{p'}\|\eta\|_p
\le
C m\|\varphi\|_{p'}\|\eta\|_{\W}.
\]
The operator $\mathcal A_m$ is continuous and monotone. In fact,
for $v_1,v_2\in\W$,
\begin{align*}
&\langle
\mathcal A_m(v_1)-\mathcal A_m(v_2),v_1-v_2
\rangle
\\
&=
\int_\Omega
\big(
|Dv_1|^{p-2}Dv_1-|Dv_2|^{p-2}Dv_2
\big)
\cdot(Dv_1-Dv_2)
\\
&\quad+
\int_\Omega
\varphi^+
\big(g_m(v_1)-g_m(v_2)\big)(v_1-v_2)
\ge0.
\end{align*}
Moreover,
\[
\langle\mathcal A_m(v),v\rangle
=
\int_\Omega|Dv|^p
+
\int_\Omega\varphi^+g_m(v)v
\ge
\|Dv\|_p^p,
\]
so $\mathcal A_m$ is coercive. Since the $p$-Laplacian is strictly
monotone, $\mathcal A_m$ is strictly monotone. Hence, by the
Browder--Minty theorem, \eqref{truncated-fixed-phi} has a unique weak
solution $w_m\in\W$.

We now obtain an $L^\infty$ estimate which is independent of both
$m$ and $\varphi$. Let $z\in\W$ be the solution of
\begin{equation}\label{comparison-z}
-\Delta_p z=|f_n|,
\qquad z=0\quad\text{on }\partial\Omega.
\end{equation}
Since $f_n\in L^\infty(\Omega)$, standard boundedness estimates for
the $p$-Laplacian give
\[
z\in\W\cap L^\infty(\Omega),
\qquad z\ge0.
\]
We claim that
\[
-z\le w_m\le z
\qquad\text{a.e. in }\Omega.
\]

Indeed, subtracting \eqref{comparison-z} from
\eqref{truncated-fixed-phi} and testing with
$(w_m-z)^+$ gives
\begin{align*}
&
\int_{\{w_m>z\}}
\big(
|Dw_m|^{p-2}Dw_m-|Dz|^{p-2}Dz
\big)
\cdot(Dw_m-Dz)
\\
&\qquad+
\int_\Omega
\varphi^+g_m(w_m)(w_m-z)^+
\\
&=
\int_\Omega(f_n-|f_n|)(w_m-z)^+
\le0.
\end{align*}
On $\{w_m>z\}$ we have $w_m>0$, because $z\ge0$, and hence
$g_m(w_m)\ge0$. Both terms on the left-hand side are therefore
nonnegative. It follows that
\[
(w_m-z)^+=0.
\]
Thus $w_m\le z$. Testing in the same way with
$(-z-w_m)^+$ yields $w_m\ge-z$. Consequently,
\begin{equation}\label{uniform-w-Linfty}
\|w_m\|_{L^\infty(\Omega)}
\le
\|z\|_{L^\infty(\Omega)}
=:M_n,
\end{equation}
where $M_n$ is independent of $m$ and $\varphi$.

Testing \eqref{truncated-fixed-phi} with $w_m$ and using
$g_m(w_m)w_m\ge0$, we also obtain
\[
\int_\Omega|Dw_m|^p
\le
\int_\Omega |f_n||w_m|
\le
\|f_n\|_\infty |\Omega|M_n.
\]
Hence
\begin{equation}\label{uniform-w-bound}
\|w_m\|_{\W}
+
\|w_m\|_{L^\infty(\Omega)}
\le C_n,
\end{equation}
with $C_n$ independent of $m$ and $\varphi$.

Finally, since $G'$ is continuous, it is bounded on the compact
interval $[-M_n,M_n]$. Choose
\[
m>
\max_{|t|\le M_n}|G'(t)|.
\]
By \eqref{uniform-w-Linfty},
\[
g_m(w_m)=G'(w_m)
\qquad\text{a.e. in }\Omega.
\]
Thus $w_m$ is in fact a weak solution of the original problem
\eqref{fixed-phi-eq}. We denote it by $w$.

The solution $w$ is unique. Indeed, if $w_1,w_2$ are two solutions, then,
since both are bounded, all the terms below are integrable, and
testing the difference of their equations with $w_1-w_2$ gives
\begin{align*}
&
\int_\Omega
\big(
|Dw_1|^{p-2}Dw_1-|Dw_2|^{p-2}Dw_2
\big)
\cdot(Dw_1-Dw_2)
\\
&\quad+
\int_\Omega
\varphi^+
\big(G'(w_1)-G'(w_2)\big)(w_1-w_2)
=0.
\end{align*}
Both terms are nonnegative, and the first is strictly monotone.
Therefore $w_1=w_2$.

For this $w$, solve
\begin{equation}\label{fixed-w-eq}
-\Delta_p\Phi=G(w),
\qquad \Phi\in\W.
\end{equation}
Since $w\in L^\infty(\Omega)$ and $G$ is continuous,
$G(w)\in L^\infty(\Omega)$. Hence \eqref{fixed-w-eq} has a unique solution
\[
\Phi\in\W\cap L^\infty(\Omega),
\qquad \Phi\ge0.
\]
Moreover,
\begin{equation}\label{Phi-bound}
\|\Phi\|_{\W}+\|\Phi\|_\infty\le C_n,
\end{equation}
where $C_n$ is independent of the original $\varphi$. Define
\[
S_n:L^{p'}(\Omega)\longrightarrow L^{p'}(\Omega),
\qquad S_n(\varphi)=\Phi.
\]

\begin{lemma}\label{Schauder-map}
For fixed $n$, the map
$S_n:L^{p'}(\Omega)\to L^{p'}(\Omega)$ is continuous and compact and maps a
sufficiently large closed ball into itself.
\end{lemma}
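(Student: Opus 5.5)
The plan is to handle the three properties in the order: invariant ball, compactness, continuity. The only nontrivial one is continuity.

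\emph{Invariant ball and compactness.} By \eqref{Phi-bound}, every $\Phi=S_n(\varphi)$ satisfies $\|\Phi\|_{\W}+\|\Phi\|_\infty\le C_n$, with $C_n$ independent of $\varphi$. Hence $\|\Phi\|_{L^{p'}}\le C_n|\Omega|^{1/p'}=:R_n$. Consequently $S_n$ maps all of $L^{p'}(\Omega)$, and in particular the closed ball $B_{R_n}$, into $B_{R_n}$. For compactness, $S_n(L^{p'})$ is contained in a set bounded in $\W$ and in $L^\infty$. By Rellich, this set is relatively compact in $L^p$, hence in $L^q$ for every finite $q$. Indeed, for $q>p$ we interpolate between $L^p$ convergence and the uniform $L^\infty$ bound, and for $q\le p$ we use the inclusion on bounded $\Omega$. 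In particular the image is relatively compact in $L^{p'}$.

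\emph{Continuity.} Let $\varphi_k\to\varphi$ in $L^{p'}$, and let $w_k,w$ and $\Phi_k,\Phi$ be the corresponding solutions. The step I expect to be the main obstacle is passing to the limit in the first equation. I will argue by subsequences and use uniqueness.
\begin{enumerate}[label=(\roman*)]
\item By \eqref{uniform-w-bound}, a subsequence satisfies $w_k\rightharpoonup \bar w$ weakly in $\W$, strongly in $L^p$, and a.e., with $\|\bar w\|_\infty\le M_n$.
\item Test the equations for $w_k$ and $\bar w$. The latter is not yet known to be a solution, so instead I use $w_k-\bar w$ in the $w_k$-equation. Since $|G'(w_k)|\le C$ and $\varphi_k^+\to\varphi^+$ in $L^{p'}$, the term $\int\varphi_k^+G'(w_k)(w_k-\bar w)\to0$. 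This follows because $(w_k-\bar w)\to0$ in $L^p$ and the factor $\varphi_k^+G'(w_k)$ is bounded in $L^{p'}$. Also $\int f_n(w_k-\bar w)\to0$, and weak convergence gives $\int|D\bar w|^{p-2}D\bar w\cdot D(w_k-\bar w)\to0$.
\item From (ii) we get
\[
\int_\Omega\bigl(|Dw_k|^{p-2}Dw_k-|D\bar w|^{p-2}D\bar w\bigr)\cdot D(w_k-\bar w)\to0.
\]
By the standard $(S_+)$ property of the $p$-Laplacian, $w_k\to\bar w$ strongly in $\W$.
\item Pass to the limit in the weak formulation with any test $\eta\in\W$. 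The gradient terms converge by strong convergence. For the other term, $\varphi_k^+G'(w_k)\to\varphi^+G'(\bar w)$ in $L^{p'}$: this uses $\varphi_k^+\to\varphi^+$ in $L^{p'}$, and $G'(w_k)\to G'(\bar w)$ a.e. with uniform bounds, via dominated convergence after extracting a further subsequence with a dominating function for $|\varphi_k|^{p'}$. So $\bar w$ solves \eqref{fixed-phi-eq}, and by uniqueness $\bar w=w$. Hence the whole sequence converges.
\end{enumerate}

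Next, $G(w_k)\to G(w)$ a.e. with the uniform bound $\sup_{[-M_n,M_n]}G$, so $G(w_k)\to G(w)$ in every $L^q$ with $q<\infty$, in particular in $W^{-1,p'}$. Testing the difference of the $\Phi$-equations with $\Phi_k-\Phi$ and using monotonicity (the $(S_+)$ or Hölder inequality for the $p$-Laplacian), we get $\Phi_k\to\Phi$ in $\W$. Therefore $\Phi_k\to\Phi$ in $L^p$. Combined with the uniform $L^\infty$ bound, this gives convergence in $L^{p'}$, which proves continuity of $S_n$.
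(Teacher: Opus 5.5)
Your proof is correct. The invariant ball, the compactness, and the final step $\Phi_k\to\Phi$ all coincide with the paper's treatment: the bound \eqref{Phi-bound} together with $|\Omega|<\infty$; Rellich plus interpolation against the uniform $L^\infty$ bound; and testing the $\Phi$-equations with $\Phi_k-\Phi$.

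You differ from the paper in the continuity of $\varphi\mapsto w$. The paper never extracts a subsequence at that point. It tests the difference of the equations for $w_j$ and $w$ with $w_j-w$ and splits
$\varphi_j^+G'(w_j)-\varphi^+G'(w)=\varphi_j^+\bigl(G'(w_j)-G'(w)\bigr)+(\varphi_j^+-\varphi^+)G'(w)$.
The first piece contributes a nonnegative term because $G'$ is nondecreasing. The second is bounded by $C_n\|\varphi_j^+-\varphi^+\|_{p'}\|w_j-w\|_p$. Since the limit $w$ is known from the outset, this is a direct quantitative stability estimate for the whole sequence, with no identification or uniqueness step.

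Your route (weak compactness, the $(S_+)$ property, passage to the limit in the equation, identification of $\bar w$ through uniqueness, then the subsequence principle) is longer. On the other hand, it uses the monotonicity of $G'$ only through the uniqueness of \eqref{fixed-phi-eq}. The convergence itself needs only continuity and local boundedness of $G'$, so your version still works for non-monotone lower-order terms whenever uniqueness is available. Your explicit appeal to $(S_+)$ is also the precise justification for the step the paper phrases as ``strict monotonicity'' of the $p$-Laplacian, which by itself does not yield strong convergence.
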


\begin{proof}
The invariant-ball property follows immediately from \eqref{Phi-bound} and
$|\Omega|<\infty$.

Let $\varphi_j\to\varphi$ in $L^{p'}(\Omega)$, and let $w_j$ and $w$ be the
solutions of \eqref{fixed-phi-eq} corresponding to $\varphi_j$ and $\varphi$,
respectively. By \eqref{uniform-w-bound}, both $w_j$ and $w$ are bounded in
$\W\cap L^\infty(\Omega)$ by a constant depending only on $n$.

Subtracting the two equations and testing with $w_j-w$ gives
\begin{align*}
&\int_\Omega
\bigl(|Dw_j|^{p-2}Dw_j-|Dw|^{p-2}Dw\bigr)
\cdot(Dw_j-Dw)\\
&\quad+
\int_\Omega
\varphi_j^+\bigl(G'(w_j)-G'(w)\bigr)(w_j-w)\\
&=
-\int_\Omega
(\varphi_j^+-\varphi^+)G'(w)(w_j-w).
\end{align*}
Since $G'$ is nondecreasing, the second term on the left is nonnegative.
Moreover, $G'(w)$ is bounded because $w$ is bounded. Hence, by H\"older's and
Poincar\'e's inequalities,
\[
\left|
\int_\Omega
(\varphi_j^+-\varphi^+)G'(w)(w_j-w)
\right|
\le
C_n\|\varphi_j^+-\varphi^+\|_{p'}\|w_j-w\|_p
\longrightarrow0,
\]
because $(w_j)$ is bounded in $\W$ and
$\varphi_j^+\to\varphi^+$ in $L^{p'}(\Omega)$. Therefore
\[
\int_\Omega
\bigl(|Dw_j|^{p-2}Dw_j-|Dw|^{p-2}Dw\bigr)
\cdot(Dw_j-Dw)
\longrightarrow0.
\]
By the (strict) monotonicity of the $p$-Laplacian,
\[
w_j\to w
\qquad\text{strongly in }\W.
\]
After passing to a subsequence, $w_j\to w$ a.e.; since the sequence is
uniformly bounded in $L^\infty(\Omega)$, dominated convergence yields
\[
G(w_j)\to G(w)
\qquad\text{strongly in }L^{p'}(\Omega).
\]

If $\Phi_j=S_n(\varphi_j)$ and $\Phi=S_n(\varphi)$, subtracting the equations
for $\Phi_j$ and $\Phi$ and using again the strict monotonicity of the
$p$-Laplacian gives
\[
\Phi_j\to\Phi
\qquad\text{strongly in }\W,
\]
and hence in $L^{p'}(\Omega)$ because of the uniform $L^\infty$ bound. This
proves continuity.

Finally, let $(\varphi_j)$ be bounded in $L^{p'}(\Omega)$. By
\eqref{Phi-bound}, the sequence $(S_n(\varphi_j))$ is bounded in
$\W\cap L^\infty(\Omega)$. Rellich's theorem gives, after extraction, strong
convergence in $L^1(\Omega)$. Interpolating this convergence with the uniform
$L^\infty$ bound yields strong convergence in $L^{p'}(\Omega)$. Thus $S_n$ is
compact.
\end{proof}

By Schauder's fixed-point theorem there exists
$\psi_n\in L^{p'}(\Omega)$ such that $S_n(\psi_n)=\psi_n$. Let $u_n$ be the
solution of \eqref{fixed-phi-eq} corresponding to $\varphi=\psi_n$. Since the
range of $S_n$ is contained in $\W\cap L^\infty(\Omega)$, we have
$\psi_n\in\W\cap L^\infty(\Omega)$. Therefore $(u_n,\psi_n)$ solves
\eqref{ap-sys} and $\psi_n\ge0$.


Set
\[
H(t)=\operatorname{sgn}(t)\ln(1+|t|).
\]
Since $G'(t)H(t)\ge0$ by \eqref{sign-Gprime}, testing the first equation in
\eqref{ap-sys} with $H(u_n)$ gives
\begin{equation}\label{weighted-basic}
\int_\Omega\frac{|Du_n|^p}{1+|u_n|}
\le
\int_\Omega|f|\ln(1+|u_n|).
\end{equation}
Let
\[
z_n=(1+|u_n|)^{1/p'}-1.
\]
Then
\begin{equation}\label{z-gradient}
|Dz_n|^p
=
\left(\frac1{p'}\right)^p
\frac{|Du_n|^p}{1+|u_n|}.
\end{equation}
Choose $0<\theta<p-1$. By \eqref{weighted-basic} and \eqref{scaled-log-young},
\[
\|Dz_n\|_p^p
\le C+C\int_\Omega(1+|u_n|)^\theta
=C+C\int_\Omega(1+z_n)^{\theta p'}.
\]
Since $\theta p'<p$, Poincar\'e and Young inequalities give
\begin{equation}\label{z-uniform}
\|z_n\|_{\W}\le C,
\end{equation}
and therefore
\begin{equation}\label{weighted-uniform}
\int_\Omega\frac{|Du_n|^p}{1+|u_n|}\le C.
\end{equation}
After extraction, $z_n\to z$ strongly in $L^p$ and a.e. Hence
\begin{equation}\label{u-ae}
u_n\to u\qquad\text{a.e. in }\Omega.
\end{equation}
Furthermore,
\begin{equation}\label{level-measure}
|\{|u_n|>k\}|
\le
\frac{C}{\big((1+k)^{1/p'}-1\big)^p}
\longrightarrow0
\end{equation}
uniformly in $n$.

For $k\ge0$ and $\varepsilon>0$, set
\[
\eta_{k,\varepsilon}(t)
=
\min\left\{\frac{(|t|-k)^+}{\varepsilon},1\right\}
\operatorname{sgn}(t).
\]
Testing the first equation in \eqref{ap-sys} with $\eta_{k,\varepsilon}(u_n)$ and using
$G'(t)\operatorname{sgn}(t)=|G'(t)|$, we obtain after
$\varepsilon\downarrow0$
\begin{equation}\label{tail-coupled}
0\le
\int_{\{|u_n|>k\}}\psi_n|G'(u_n)|
\le
\int_{\{|u_n|>k\}}|f|.
\end{equation}
In particular,
\begin{equation}\label{coupled-L1}
\int_\Omega\psi_n|G'(u_n)|\le\|f\|_1.
\end{equation}
Testing the second equation  in \eqref{ap-sys}  with $\psi_n$ and using
\eqref{global-G-Gprime},
\begin{align*}
\|D\psi_n\|_p^p
&=\int_\Omega\psi_nG(u_n)\\
&\le C\int_\Omega\psi_n
+C\int_\Omega\psi_n|G'(u_n)|\\
&\le C\|D\psi_n\|_p+C.
\end{align*}
Thus
\begin{equation}\label{psi-Wp}
\|\psi_n\|_{\W}\le C.
\end{equation}
Consequently, after extraction,
\begin{equation}\label{psi-conv}
\psi_n\rightharpoonup\psi\quad\text{in }\W,
\qquad
\psi_n\to\psi\quad\text{strongly in }L^1(\Omega),
\qquad
\psi_n\to\psi\quad\text{a.e.},
\end{equation}
with $\psi\ge0$.

We now show that 
\begin{equation}\label{coupled-strong-target}
\psi_nG'(u_n)\to\psi G'(u)
\quad\text{strongly in }L^1(\Omega).
\end{equation}
For a measurable $E\subset\Omega$, \eqref{tail-coupled} gives
\begin{align*}
\int_E\psi_n|G'(u_n)|
&\le
\int_{\{|u_n|>k\}}|f|
+
\sup_{|t|\le k}|G'(t)|\int_E\psi_n.
\end{align*}
By \eqref{level-measure} and the absolute continuity of the integral of $f$,
the first term is uniformly small for large $k$. The second is uniformly
small for small $|E|$ because $\psi_n\to\psi$ strongly in $L^1$. Thus
$\{\psi_nG'(u_n)\}$ is equiintegrable. Together with the a.e. convergence,
Vitali's theorem proves \eqref{coupled-strong-target}.

We next prove
\begin{equation}\label{G-strong-target}
G(u_n)\to G(u)
\quad\text{strongly in }L^1(\Omega).
\end{equation}
Testing the first equation in \eqref{ap-sys}  with $T_k(u_n)$ and using
$G'(u_n)T_k(u_n)\ge0$ gives
\begin{equation}\label{Tk-gradient}
\int_\Omega|DT_k(u_n)|^p\le k\|f\|_1.
\end{equation}
Testing the second equation in \eqref{ap-sys} with $|T_k(u_n)|$ yields
\begin{align*}
\int_\Omega|T_k(u_n)|G(u_n)
&=\int_\Omega |D\psi_n|^{p-2}D\psi_n\cdot D|T_k(u_n)|\\
&\le \|D\psi_n\|_p^{p-1}\|DT_k(u_n)\|_p
\le Ck^{1/p}.
\end{align*}
Therefore
\begin{equation}\label{G-tail}
\int_{\{|u_n|>k\}}G(u_n)
\le \frac{C}{k^{1/p'}}.
\end{equation}
For measurable $E$,
\[
\int_EG(u_n)
\le \frac{C}{k^{1/p'}}+
\sup_{|t|\le k}G(t)\,|E|.
\]
Hence $\{G(u_n)\}$ is equiintegrable, and Vitali gives
\eqref{G-strong-target}. By Lemma~\ref{global-growth-G},
\eqref{exp-lower-G-global}, the family $\{e^{a|u_n|}\}$ is also
equiintegrable and
\begin{equation}\label{exp-a-strong}
e^{a|u_n|}\to e^{a|u|}
\quad\text{strongly in }L^1(\Omega).
\end{equation}

Testing the first equation in \eqref{ap-sys}  with $u_n$ and using
$G'(u_n)u_n\ge0$, we find
\[
\int_\Omega|Du_n|^p\le\int_\Omega|f||u_n|.
\]
Using \eqref{scaled-young-eq} with the constant $a$ from
Lemma~\ref{global-growth-G},
\begin{align*}
\int_\Omega|Du_n|^p
&\le C\int_\Omega|f|\ln(1+|f|)
+\int_\Omega e^{a|u_n|}+C\\
&\le C.
\end{align*}
Thus
\begin{equation}\label{u-Wp}
\|u_n\|_{\W}\le C,
\end{equation}
and, up to a subsequence,
\begin{equation}\label{u-weak}
u_n\rightharpoonup u\quad\text{in }\W.
\end{equation}

We claim
\begin{equation}\label{fnun-strong}
f_nu_n\to fu\quad\text{strongly in }L^1(\Omega).
\end{equation}
Indeed, $f_nu_n\to fu$ a.e. and, by \eqref{scaled-young-eq},
\[
|f_nu_n|
\le C|f|\ln(1+|f|)+e^{a|u_n|}+C.
\]
The right-hand side is equiintegrable by \eqref{datum-assumption} and
\eqref{exp-a-strong}. Vitali proves \eqref{fnun-strong}.

We use the following standard lemma; see, for
example, \cite{BoccardoMurat}.

\begin{lemma}\label{flux-lemma}
Let $v_n,v\in\W$ satisfy
\[
v_n\rightharpoonup v\quad\text{in }\W,
\qquad
v_n\to v\quad\text{a.e.},
\]
and assume
\[
-\operatorname{div}|Dv_n|^{p-2}Dv_n=F_n
\]
in the sense of bounded $\W$ test functions, with
$F_n\to F$ strongly in $L^1(\Omega)$. Then, after extraction,
\[
Dv_n\to Dv\quad\text{a.e. in }\Omega,
\qquad
|Dv_n|^{p-2}Dv_n\rightharpoonup |Dv|^{p-2}Dv
\quad\text{in }L^{p'}(\Omega;\R^N).
\]
Consequently $-\operatorname{div}|Dv|^{p-2}Dv=F$ in the sense of bounded $\W$ test
functions.
\end{lemma}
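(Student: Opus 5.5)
The plan follows the Boccardo--Murat strategy for a.e. convergence of gradients: first truncate, then prove $L^1$ convergence of a monotonicity defect, and finally use the almost-everywhere structure of the strictly monotone vector field $\xi\mapsto|\xi|^{p-2}\xi$. Since $v_n$ is bounded in $\W$ and $v_n\to v$ a.e., Egorov's theorem and the weak convergence $T_k(v_n)\rightharpoonup T_k(v)$ in $\W$ are available for every $k>0$. For fixed $k$ and $h>0$, I would test the equation of $v_n$ with the bounded function $T_h\bigl(v_n-T_k(v)\bigr)\in\W\cap L^\infty(\Omega)$. This gives
\[
\int_{\{|v_n-T_k(v)|<h\}}|Dv_n|^{p-2}Dv_n\cdot D\bigl(v_n-T_k(v)\bigr)=\int_\Omega F_nT_h\bigl(v_n-T_k(v)\bigr)\le h\sup_n\|F_n\|_1\le Ch.
\]

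The key step is to convert this into an estimate on the nonnegative quantity
\[
e_n=\bigl(|Dv_n|^{p-2}Dv_n-|DT_k(v)|^{p-2}DT_k(v)\bigr)\cdot\bigl(Dv_n-DT_k(v)\bigr)
\]
over the set $\{|v_n-T_k(v)|<h\}$. To do this, subtract $\int_{\{|v_n-T_k(v)|<h\}}|DT_k(v)|^{p-2}DT_k(v)\cdot D(v_n-T_k(v))$. This term converges to $\int_{\{|v-T_k(v)|<h\}}|DT_k(v)|^{p-2}DT_k(v)\cdot D(v-T_k(v))$. The justification is that $\chi_{\{|v_n-T_k(v)|<h\}}|DT_k(v)|^{p-2}DT_k(v)$ converges strongly in $L^{p'}$ by dominated convergence, with care on the level set $\{|v-T_k(v)|=h\}$. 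That set is handled by choosing $h$ outside a countable set, so that it has measure zero. The weak convergence of $Dv_n$ then handles the other factor. The limit vanishes, since $DT_k(v)=0$ where $|v|>k$ and $D(v-T_k(v))=0$ where $|v|\le k$. Hence $\limsup_n\int_{\{|v_n-T_k(v)|<h\}}e_n\le Ch$.

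Next I would restrict to $\{|v|\le k\}\cap\{|v_n-v|<h\}$, which is contained in the set above, and estimate $e_n^{\theta}$ for some $0<\theta<1$ by H\"older on the complement. Here $e_n$ is bounded in $L^1$ thanks to the $\W$ bound, and $|\{|v_n-v|\ge h\}|\to0$. Using $|\{|v|>k\}|\to0$ as $k\to\infty$, this yields $\int_\Omega e_n^{\theta}\chi_{\{|v|\le k\}}\le C h^{\theta}|\Omega|^{1-\theta}+o(1)$. Letting $n\to\infty$ and then $h\to0$ gives $e_n\to0$ in measure on $\{|v|\le k\}$. A standard pointwise argument then shows $Dv_n\to Dv$ a.e. on $\{|v|\le k\}$: if $e_n(x)\to0$, then $Dv_n(x)$ is bounded, by the growth of $e_n$ in $Dv_n$, and any cluster point $\xi$ satisfies $(|\xi|^{p-2}\xi-|Dv|^{p-2}Dv)\cdot(\xi-Dv)=0$, hence $\xi=Dv(x)$ by strict monotonicity. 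A diagonal extraction in $k\in\N$ gives a.e. convergence on $\Omega$ up to a null set.

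Finally, $|Dv_n|^{p-2}Dv_n$ is bounded in $L^{p'}$ and converges a.e. to $|Dv|^{p-2}Dv$, so it converges weakly in $L^{p'}(\Omega;\R^N)$ to this limit. Passing to the limit in $\int|Dv_n|^{p-2}Dv_n\cdot D\phi=\int F_n\phi$ for $\phi\in\W\cap L^\infty$, using $F_n\to F$ in $L^1$, gives the limit equation. I expect the main obstacle to be the second step, that is, the limit identification of the cross term with the indicator sets (including the choice of admissible $h$) and the passage from an $L^1$ smallness on a moving set to convergence in measure. For this step I would follow the argument of \cite{BoccardoMurat}.
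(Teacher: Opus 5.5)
Your argument is correct and is essentially the Boccardo--Murat proof that the paper cites for this lemma without giving an argument of its own: a truncated test function, a monotonicity defect that is $O(h)$ where the truncation is inactive, H\"older with an exponent $\theta<1$ to get convergence in measure, the pointwise strict-monotonicity argument, and finally a.e.\ convergence plus the $L^{p'}$ bound to identify the weak limit of the fluxes. You could simplify by testing directly with $T_h(v_n-v)\in\W\cap L^\infty(\Omega)$: since $T_h(v_n-v)\rightharpoonup0$ in $\W$, the cross term vanishes at once and the $T_k(v)$ and level-set bookkeeping is unnecessary.
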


Recall that
\[
\psi_nG'(u_n)\to\psi G'(u)
\quad\text{strongly in }L^1(\Omega),
\]
and $f_n\to f$ strongly in $L^1$. Therefore
\[
F_n:=f_n-\psi_nG'(u_n)
\to
F:=f-\psi G'(u)
\quad\text{strongly in }L^1.
\]
Since $-\Delta_pu_n=F_n$, Lemma~\ref{flux-lemma} yields
\[
Du_n\to Du\quad\text{a.e.},
\qquad
|Du_n|^{p-2}Du_n\rightharpoonup |Du|^{p-2}Du
\quad\text{in }L^{p'}.
\]
Passing to the limit against $v\in\W\cap L^\infty(\Omega)$ gives
\[
\int_\Omega |Du|^{p-2}Du\cdot Dv
+\int_\Omega\psi G'(u)v
=\int_\Omega fv.
\]

Now, recall that
\[
G(u_n)\to G(u)\quad\text{strongly in }L^1(\Omega).
\]
Applying Lemma~\ref{flux-lemma} to
$-\Delta_p\psi_n=G(u_n)$ gives
\[
D\psi_n\to D\psi\quad\text{a.e.},
\qquad
|D\psi_n|^{p-2}D\psi_n\rightharpoonup |D\psi|^{p-2}D\psi
\quad\text{in }L^{p'}.
\]
Thus
\begin{equation}\label{psi-bounded-tests}
\int_\Omega |D\psi|^{p-2}D\psi\cdot D\varphi
=\int_\Omega G(u)\varphi
\end{equation}
for every $\varphi\in\W\cap L^\infty(\Omega)$. 

If $\varphi\ge0$ belongs to
$\W$ only, use $T_k(\varphi)$ in \eqref{psi-bounded-tests}. Since
$DT_k(\varphi)\to D\varphi$ strongly in $L^p$ and
$T_k(\varphi)\uparrow\varphi$, one may pass to the limit on both sides. A
general $\varphi$ follows by writing $\varphi=\varphi^+-\varphi^-$. Hence
\eqref{weak-second} holds for every $\varphi\in\W$.

We complete the proof of Theorem~\ref{main-thm} by proving the next two lemmas.
\begin{lemma}\label{first-min-lemma}
The function $u$ minimizes
\[
v\mapsto
\frac1p\int_\Omega|Dv|^p
+\int_\Omega\psi G(v)
-\int_\Omega fv
\]
over $\mathcal D_\psi$.
\end{lemma}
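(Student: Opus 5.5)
Denote $I(v)=\frac1p\int_\Omega|Dv|^p+\int_\Omega\psi G(v)-\int_\Omega fv$. The plan is to use convexity of $I$ together with the weak equation \eqref{weak-first}. The complication is that an arbitrary $v\in\mathcal D_\psi$ is neither bounded nor admissible as a test function, so we work with truncations.

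First we check that $u\in\mathcal D_\psi$, so that $I(u)$ is finite. We have $fu\in L^1$ by \eqref{fnun-strong}. For $\psi G(u)$, combine \eqref{global-G-Gprime} with $\psi\in L^1$ and $\psi|G'(u)|\in L^1$; the latter follows from \eqref{coupled-L1} and Fatou's lemma.

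Next fix $v\in\mathcal D_\psi$ and $k,h>0$. We compare $I(u)$ with $I(v)$ through the pointwise convexity inequalities
\[
\frac1p|D\xi|^p\ge\frac1p|D\zeta|^p+|D\zeta|^{p-2}D\zeta\cdot(D\xi-D\zeta),\qquad G(s)\ge G(r)+G'(r)(s-r).
\]
The natural test function is $v-u$, which need not be bounded. Instead we test \eqref{weak-first} with $T_h(v-T_k(u))$, or more simply first with $T_h(v)-T_h(u)$. This function belongs to $\W\cap L^\infty$, so \eqref{weak-first} gives
\[
\int_\Omega|Du|^{p-2}Du\cdot D(T_h v-T_h u)+\int_\Omega\psi G'(u)(T_hv-T_hu)=\int_\Omega f(T_hv-T_hu).
\]
Let $h\to\infty$ term by term.
\begin{itemize}
\item The gradient term: $D T_h v\to Dv$ and $DT_hu\to Du$ strongly in $L^p$, so it converges to $\int|Du|^{p-2}Du\cdot D(v-u)$.
\item The right side: $|f T_h v|\le|fv|$ and $|fT_hu|\le|fu|$, both in $L^1$, so dominated convergence applies.
\item The coupled term $\psi G'(u)T_hu$: it is nonnegative and increasing in $h$, and is dominated by $\psi G'(u)u$.
\end{itemize}

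The main obstacle is controlling $\psi G'(u)T_hv$, and showing that $\psi G'(u)u$ is integrable. I would avoid needing integrability by inserting convexity before the limit. On the set where $|u|\le h$, we have
\[
G'(u)(T_hv-u)\le G(T_hv)-G(u)\le G(v)-G(u)\quad\text{if } |T_hv|\le|v|,
\]
using evenness and monotonicity of $G$ in $|t|$. On $\{|u|>h\}$, $\mathrm{sgn}\,G'(u)=\mathrm{sgn}\,u$ and $|T_hv|\le h=|T_hu|$, so $G'(u)(T_hv-T_hu)\le0$. Hence
\[
\psi G'(u)(T_hv-T_hu)\le \psi\bigl(G(v)-G(u)\bigr)\chi_{\{|u|\le h\}},
\]
and the right-hand side is integrable because $v,u\in\mathcal D_\psi$.

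Combining this bound with the gradient convexity inequality applied to $\xi=T_hv$, $\zeta=T_hu$ gives the convexity bound
\[
\frac1p\int|D T_hv|^p-\frac1p\int|DT_hu|^p\ge\int|DT_hu|^{p-2}DT_hu\cdot D(T_hv-T_hu).
\]
Together with the tested equation (using $Du=DT_hu$ on $\{|u|<h\}$), this yields
\[
I_h(T_hv)\ge I_h(T_hu)+\text{error}.
\]
The error consists of gradient cross terms supported on $\{|u|\ge h\}$, and it tends to $0$ by Hölder's inequality since $|\{|u|\ge h\}|\to0$.

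Finally let $h\to\infty$. Dominated and monotone convergence in every term give $I(v)\ge I(u)$. If the term-by-term limit of the gradient pairing proves messy, the fallback is to take $\xi=v$, $\zeta=u$ directly and pass to the limit in the pairing using strong $L^p$ convergence of $DT_hv$ and $DT_hu$.
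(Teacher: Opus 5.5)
Your argument is correct, and it takes a genuinely different route from the paper.

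\textbf{The paper's route.} The paper never tests the limit equation with an unbounded function. It uses that $u_n$ minimizes the approximate functional $v\mapsto\frac1p\int|Dv|^p+\int\psi_nG(v)-\int f_nv$ over bounded $v$; there $v-u_n$ is an admissible test function, so convexity applies directly. It then passes to the limit using weak lower semicontinuity of the $p$-energy, Fatou's lemma for $\psi_nG(u_n)$, and the strong convergence \eqref{fnun-strong}. Only at the end does it extend to all of $\mathcal D_\psi$, by truncating $v$ alone and using monotone convergence for $\psi G(T_kv)$.

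\textbf{Your route.} You run the convexity argument directly on the limit pair. The step that makes it work is your one-sided pointwise bound
\[
\psi G'(u)(T_hv-T_hu)\le\psi\bigl(G(v)-G(u)\bigr)\chi_{\{|u|\le h\}}.
\]
It uses \eqref{sign-Gprime} on $\{|u|>h\}$, and convexity together with monotonicity of $G$ in $|t|$ on $\{|u|\le h\}$. This yields $\limsup_h\int\psi G'(u)(T_hv-T_hu)\le\int\psi\bigl(G(v)-G(u)\bigr)$ without ever needing $\psi G'(u)(v-u)\in L^1$.

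\textbf{What each buys.} Your argument is intrinsic. It shows that any weak solution with $\psi\ge0$ and $u\in\mathcal D_\psi$ minimizes the functional, independently of the approximation scheme and of lower semicontinuity. The approximating sequence enters only to get $fu\in L^1$. The paper's argument is shorter given the convergence results already established, and it avoids the unbounded-test-function issue altogether.

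\textbf{Presentation.} Your write-up would be cleaner if you committed to your ``fallback'':
\begin{enumerate}
\item apply gradient convexity with $\zeta=u$, $\xi=v$;
\item pass to the limit in $\int|Du|^{p-2}Du\cdot D(T_hv-T_hu)$ and $\int f(T_hv-T_hu)$ using strong $L^p$ convergence of the truncations and dominated convergence;
\item combine with the one-sided bound.
\end{enumerate}
With that, several pieces become unnecessary: the undefined notation $I_h$, the alternative test function $T_h(v-T_k(u))$, and the remark on monotone convergence of $\psi G'(u)T_hu$.
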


\begin{proof}
For every $n$, the construction of the approximate solution implies that
$u_n$ minimizes
\[
v\mapsto
\frac1p\int_\Omega |Dv|^p
+\int_\Omega \psi_n G(v)
-\int_\Omega f_n v.
\]
Hence, for every
$v\in\W\cap L^\infty(\Omega)$,
\begin{equation}\label{approx-min-ineq}
\frac1p\int_\Omega |Du_n|^p
+\int_\Omega \psi_nG(u_n)
-\int_\Omega f_nu_n
\le
\frac1p\int_\Omega |Dv|^p
+\int_\Omega \psi_nG(v)
-\int_\Omega f_nv.
\end{equation}

By weak lower semicontinuity,
\[
\frac1p\int_\Omega |Du|^p
\le
\liminf_{n\to\infty}
\frac1p\int_\Omega |Du_n|^p.
\]
Moreover, since $\psi_n\to\psi$ and $u_n\to u$ a.e., with
$\psi_n\ge0$ and $G\ge0$, Fatou's lemma gives
\[
\int_\Omega\psi G(u)
\le
\liminf_{n\to\infty}\int_\Omega\psi_nG(u_n).
\]
Also,
\[
f_nu_n\to fu
\qquad\text{strongly in }L^1(\Omega).
\]
Thus
\[
\frac1p\int_\Omega |Du|^p
+\int_\Omega\psi G(u)
-\int_\Omega fu
\le
\liminf_{n\to\infty}
\left[
\frac1p\int_\Omega |Du_n|^p
+\int_\Omega\psi_nG(u_n)
-\int_\Omega f_nu_n
\right].
\]

On the other hand, if $v\in\W\cap L^\infty(\Omega)$ is fixed, then
$G(v)\in L^\infty(\Omega)$. Since $\psi_n\to\psi$ strongly in $L^1(\Omega)$
and $f_n\to f$ strongly in $L^1(\Omega)$,
\[
\int_\Omega\psi_nG(v)\to\int_\Omega\psi G(v),
\qquad
\int_\Omega f_nv\to\int_\Omega fv.
\]
Passing to the limit in \eqref{approx-min-ineq} therefore yields
\[
\frac1p\int_\Omega |Du|^p
+\int_\Omega\psi G(u)
-\int_\Omega fu
\le
\frac1p\int_\Omega |Dv|^p
+\int_\Omega\psi G(v)
-\int_\Omega fv
\]
for every $v\in\W\cap L^\infty(\Omega)$.

Finally, let $v\in\mathcal D_\psi$ and set $v_k=T_k(v)$. Then
$v_k\to v$ strongly in $\W$, while
\[
|fv_k|\le |fv|\in L^1(\Omega),
\]
so $fv_k\to fv$ in $L^1(\Omega)$. Moreover,
$|v_k|\uparrow|v|$ and, since $G$ is even and nondecreasing on
$[0,\infty)$,
\[
G(v_k)\uparrow G(v).
\]
Since $\psi\ge0$, the monotone convergence theorem gives
\[
\int_\Omega\psi G(v_k)\to\int_\Omega\psi G(v).
\]
Passing to the limit as $k\to\infty$ proves the result.
\end{proof}

Hence
\begin{equation}\label{right-saddle}
\J(u,\psi)\le\J(v,\psi)
\qquad\forall v\in\mathcal D_\psi.
\end{equation}

Define
\[
\mathcal F_u(\varphi)
=-\frac1p\int_\Omega|D\varphi|^p
+\int_\Omega\varphi^+G(u).
\]

\begin{lemma}\label{second-max-lemma}
The function $\psi$ maximizes $\mathcal F_u$ over $\mathcal E_u$.
\end{lemma}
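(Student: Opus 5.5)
The plan is to exploit concavity of $\mathcal F_u$ together with the weak equation \eqref{weak-second}, which is already known for every test function in $\W$. Fix $\varphi\in\mathcal E_u$. The first observation is that $\psi\in\mathcal E_u$. Indeed, testing \eqref{weak-second} with $T_k(\psi)\ge0$ gives
\[
\int_\Omega G(u)T_k(\psi)=\int_\Omega|D\psi|^{p-2}D\psi\cdot DT_k(\psi)\le\|D\psi\|_p^p .
\]
Monotone convergence as $k\to\infty$ then yields $\int_\Omega\psi G(u)\le\|D\psi\|_p^p<\infty$. Directly taking $\varphi=\psi$ in \eqref{weak-second} also works, since that identity already holds for all of $\W$ with $G(u)\psi$ understood as an integral of a nonnegative function. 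Hence $\int_\Omega\psi G(u)=\|D\psi\|_p^p$ and $\mathcal F_u(\psi)$ is finite.

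Next, use the convexity inequality for $t\mapsto\frac1p|t|^p$ on $\R^N$:
\[
\frac1p|D\varphi|^p\ge\frac1p|D\psi|^p+|D\psi|^{p-2}D\psi\cdot(D\varphi-D\psi).
\]
Integrating this over $\Omega$ gives
\[
-\frac1p\int_\Omega|D\varphi|^p\le-\frac1p\int_\Omega|D\psi|^p-\int_\Omega|D\psi|^{p-2}D\psi\cdot(D\varphi-D\psi).
\]
Applying \eqref{weak-second} with test function $\varphi-\psi\in\W$, the last integral equals $\int_\Omega G(u)(\varphi-\psi)$. Both $G(u)\varphi^+$ and $G(u)\psi$ are integrable, while $G(u)\varphi^-\ge0$. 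So $\int_\Omega G(u)\varphi$ makes sense in $(-\infty,\infty)$, possibly as $-\infty$ if $\varphi^-G(u)\notin L^1$. In that case the identity should be read via the splitting $\varphi=\varphi^+-\varphi^-$, which is exactly how \eqref{weak-second} was extended to all of $\W$ above; that extension used monotone convergence separately for $\varphi^\pm$, so the relevant integrals are finite anyway.

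It follows that
\[
\mathcal F_u(\varphi)\le\mathcal F_u(\psi)+\int_\Omega G(u)\bigl(\varphi^+-\varphi\bigr)-\int_\Omega G(u)\bigl(\psi^+-\psi\bigr).
\]
Since $\psi\ge0$, the last term vanishes. The remaining term is $\int_\Omega G(u)(\varphi^+-\varphi)=\int_\Omega G(u)\varphi^-$, which has the wrong sign. The main obstacle is therefore the negative part of $\varphi$. To handle it, I will first reduce to $\varphi\ge0$: because $|D\varphi^+|\le|D\varphi|$ pointwise, we have $\mathcal F_u(\varphi)\le\mathcal F_u(\varphi^+)$, and $\varphi^+\in\mathcal E_u$. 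For nonnegative $\varphi$ the extra term is zero, and the convexity argument gives $\mathcal F_u(\varphi^+)\le\mathcal F_u(\psi)$. The only delicate point is the integrability justifying the use of \eqref{weak-second} with $\varphi^+-\psi$, where both pieces have $G(u)$-integrable positive parts, and this is guaranteed by the definition of $\mathcal E_u$ and the first step. Combined with \eqref{right-saddle}, this yields \eqref{saddle}.
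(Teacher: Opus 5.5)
Your proposal is correct and follows essentially the same route as the paper. You verify $\psi\in\mathcal E_u$ by testing \eqref{weak-second} with $\psi$, reduce to $\varphi\ge0$ via $\mathcal F_u(\varphi)\le\mathcal F_u(\varphi^+)$ (the paper uses the equivalent identity $|D\varphi|^p=|D\varphi^+|^p+|D\varphi^-|^p$), and then combine convexity of $\xi\mapsto|\xi|^p/p$ with \eqref{weak-second} tested by $\varphi-\psi$; your detour through general $\varphi$ and the $\varphi^-G(u)$ term is harmless but unnecessary once you make that reduction.
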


\begin{proof}
First observe that $\psi\in\mathcal E_u$. Indeed, since $\psi\ge0$,
testing \eqref{weak-second} with $\psi$ gives
\[
\int_\Omega \psi G(u)
=
\int_\Omega |D\psi|^p
<\infty.
\]

Let $\varphi\in\mathcal E_u$. Since
\[
|D\varphi|^p
=
|D\varphi^+|^p+|D\varphi^-|^p
\quad\text{a.e.},
\]
we have
\[
\mathcal F_u(\varphi)
=
\mathcal F_u(\varphi^+)
-\frac1p\int_\Omega|D\varphi^-|^p
\le
\mathcal F_u(\varphi^+).
\]
Moreover, $\varphi^+\in\mathcal E_u$. It is therefore enough to consider
$\varphi\ge0$.

By convexity of $\xi\mapsto |\xi|^p/p$,
\[
\frac1p|D\varphi|^p
\ge
\frac1p|D\psi|^p
+|D\psi|^{p-2}D\psi\cdot(D\varphi-D\psi).
\]
Integrating and using \eqref{weak-second} with the admissible test function
$\varphi-\psi\in\W$, we obtain
\[
\frac1p\int_\Omega|D\varphi|^p
\ge
\frac1p\int_\Omega|D\psi|^p
+\int_\Omega G(u)(\varphi-\psi).
\]
Here all terms are finite because
$\varphi G(u),\psi G(u)\in L^1(\Omega)$. Rearranging yields
\[
-\frac1p\int_\Omega|D\varphi|^p+\int_\Omega\varphi G(u)
\le
-\frac1p\int_\Omega|D\psi|^p+\int_\Omega\psi G(u),
\]
that is,
\[
\mathcal F_u(\varphi)\le\mathcal F_u(\psi).
\]
\end{proof}

Thus
\begin{equation}\label{left-saddle}
\J(u,\varphi)\le\J(u,\psi)
\qquad\forall\varphi\in\mathcal E_u.
\end{equation}
Combining \eqref{right-saddle} and \eqref{left-saddle} finishes the proof
Theorem~\ref{main-thm}.

\section{Final remarks}

The argument uses no explicit expression for $G$. The decisive properties are
convexity, positivity, evenness, and the comparison
\[
G(t)\lesssim1+|G'(t)|
\quad\text{at infinity}.
\]
The latter has two roles. First, the truncated-sign estimate for the first
equation controls the source in the second equation and yields the uniform
$W^{1,p}$ estimate for $\psi_n$. Second, together with convexity and
nontriviality, it forces at least exponential growth of $G$ at infinity. 

The estimate
\[
\int_{\{|u_n|>k\}}G(u_n)\le Ck^{-1/p'}
\]
is independent of the precise growth of $G$ and follows only from the coupled
structure and the $p$-growth of the principal operators. When $p=2$, it
reduces to the square-root tail estimate in the exponential semilinear case, see \cite{BO2026}.

%

\end{document}